\newcommand{\comment}[1]{}
\newcommand{\R}{\mathbf{R}}
\newcommand{\Z}{\mathbf{Z}}
\newcommand{\kv}{\mathrm{k}}
\DeclareMathOperator{\Gal}{Gal}
\DeclareMathOperator{\ord}{ord}
\DeclareMathOperator{\fv}{f}
\DeclareMathOperator{\Dv}{D}
\DeclareMathOperator{\Iv}{I}
\theoremstyle{plain}
\newtheorem{theorem}{Theorem}[section]
\newtheorem{corollary}[theorem]{Corollary}
\newtheorem{lemma}[theorem]{Lemma}
\theoremstyle{definition}
\newtheorem{example}[theorem]{Example}
\begin{document}

\title{A short proof of a Chebotarev density theorem for function fields}
\author{Michiel Kosters}
\address{University of California, Irvine, Department of
Mathematics, 340 Rowland Hall, Irvine, CA 92697}
\email{kosters@gmail.com}
\urladdr{https://sites.google.com/site/kosters}
\date{\today}
\thanks{This is part of my PhD thesis written under the supervision of Hendrik Lenstra.}
\subjclass[2010]{11R58, 11R45}

\begin{abstract}
In this article we discuss a version of the Chebotarev density for function fields over perfect fields with procyclic absolute Galois groups. Our version of this density theorem differs from other versions in two aspects: we include ramified primes and we do not have an error term.
\end{abstract}

\maketitle

\section{Introduction}

\subsection{Motivation}

One of the important results in arithmetic geometry is called `the' Chebotarev density theorem for function fields. We will first briefly describe such a theorem. Let $k$ be a finite field and let $K$ be a function field over $k$. Let $M/K$ be a finite Galois extension with group $G$. To a prime $P$ of $K$ which is unramified in $M/K$, one can associate a conjugacy class $(P,M/K)$ of $G$, called the Frobenius class. This Frobenius element for example gives the splitting behavior of $P$ in $M/K$. The Chebotarev density theorem, in many different forms, gives an equidistribution result for the occurrence of conjugacy classes as the Frobenius class of primes. An example of such a theorem is \cite[Theorem 9.13B]{ROS}:

\begin{theorem}
Let $M/K$ be a finite Galois extension of function fields over a finite field $k$ of cardinality $q$ with group $G$. Assume that $k$ is the full constant field of $M$. Let $C \subseteq G$ be a conjugacy class and let $S'_K$ be the set of primes in $K$ which are unramified in $M$. Then for each positive integer $n$ we have
\begin{align*}
\# \{ P \in S_K' | \deg_k(P)=n,\ (P,M/K)=C \} = \frac{\#C}{\#G} \frac{q^n}{n} + O \left( \frac{q^{n/2}}{n} \right).
\end{align*}
\end{theorem}

We will state a different version of a density theorem which is significantly different from the above theorem. Most importantly, our version will include ramified primes. Secondly, our version will not contain an error term. Instead of an equidistribution result, we `parametrize' the points with a given Frobenius class by primes of twists of our original function field. We can get a version with error terms by taking into account the genus of the auxiliary function fields. Furthermore, our statement is more general than the above statement, although these generalities were known. We allow $k$ to be perfect with procyclic absolute Galois group instead of being a finite field, and $k$ does not necessarily need to be the full constant field of $M$. For simplicity however, we restrict to the case when $n=1$, that is, we only treat rational primes.

\subsection{New Chebotarev density theorem} \label{444}

Let us describe the new Chebotarev density theorem.

Let $k$ be a perfect field with procyclic absolute Galois group with $F \in \Gal(\overline{k}/k)$ as a topological generator. Let $r=\prod_p p^{n_p}$ with $p$ prime and $n_p \in \Z_{\geq 0} \sqcup \{\infty\}$ be the order of the profinite group $\Gal(\overline{k}/k)$, which is a Steinitz number. For example, one can take $k$ to be a quasi-finite field, such as a finite field. There are also quasi-finite fields in characteristic $0$ \cite[Chapter XIII, Section 2]{SE1}. One can also take $k$ to be the maximal $p$-power extension of a finite field, or take $k=\R$. We can also allow $k$ to be algebraically closed, but the results below will not be interesting in that case. 

Let $K$ be a geometrically irreducible function field over $k$, that is, a finitely generated field extension of $k$ of transcendence degree $1$ such that $k$ is integrally closed in $K$. We denote by $\mathcal{P}_{K/k}$ the set of valuation rings of $K$ containing $k$ which are not equal to $K$. Let $P \in \mathcal{P}_{K/k}$. By $\kv_P$ we denote the residue field of the valuation ring $P$. We set $\deg_k(P)=[\kv_P:k]$. The subset of these valuation rings $P$ such that $\kv_P=k$, the set of rational primes of $K$, is denoted by $\mathcal{P}_{K/k}^1$. The restriction of $P$ to a subfield $K'$ of $K$ is denoted by $P|_{K'}$ and we say that $P$ lies above $P|_{K'}$. 

Let $M/K$ be a finite Galois extension with Galois group $G=\Gal(M/K)$. Let $P \in \mathcal{P}_{K/k}$ with valuation $Q$ above it in $M$. The group $G$ acts transitively on the set of primes above $P$. Set $\Dv_{Q,K}=\{g \in G: gQ=Q\}$ (\emph{decomposition group}). Note that we have a natural map $\Dv_{Q,K} \to \Gal_{\kv_P}(\kv_Q)$. The kernel of this map is called the \emph{inertia group} and is denoted by $\Iv_{Q,K}$. In fact, we have an exact sequence $1 \to \Iv_{Q,K} \to \Dv_{Q,K} \to \Gal(\kv_Q/\kv_P) \to 0$ (\cite[Theorem 3.6]{KO7}). After a choice of a $k$-embedding $\kv_Q \subseteq \overline{k}$ we have a map $\Gal(\overline{k}/\kv_P) \to \Gal(\kv_Q/\kv_P)$. The image of $F^{[\kv_P:k]}$ is a generator of $\Gal(\kv_Q/\kv_P)$ which does not depend on the choice of the embedding. The set of elements in $\Dv_{Q,K}$ mapping to this generator is denoted by $(Q,M/K)$. We set 
\begin{align*}
(P,M/K)=\{g x g^{-1}: g \in G, x \in (Q,M/K)\}=\bigcup_{Q' \in \mathcal{P}_{M/k}:\ Q'|_K=P} (Q',M/K),
\end{align*}
which is a union of conjugacy classes.  This definition does not depend on the choice of $Q$. If $\Iv_{Q,K}=0$, this is just a single conjugacy class.

We define a probability measure $(P,M)$ on $G$ as follows. This is a function $(P,M): G \to \R_{\geq 0}$ such that $\sum_{g \in G} (P,M)(g)=1$. For $\gamma \in G$, with
conjugacy class $\Gamma$, we set:\index{$(P,M)(\gamma)$}
\begin{align*}
 (P,M)(\gamma)= \frac{\# \left((Q,M/K) \cap \Gamma\right)}{\#\Gamma \cdot \#(Q,M/K)}
\end{align*}
for any prime $Q$ of $M$ above $P$.
If for $Q$ above $P$ one has $\Iv_{Q,K}=0$, then the distribution is evenly divided over the whole conjugacy class $(P,M/K)$ and zero outside. If $\Iv_{Q,K}=0$ and $\gamma \in G$ with $\ord(\gamma) \nmid r$ (divisibility of Steinitz numbers), one has $(P,M)(\gamma)=0$.

Let $k'$ be the integral closure of $k$ in $M$ (the full constant field of $M$). Let $N=\Gal(M/Kk')$, which is the geometric Galois group. Note that $G/N
=\Gal(Kk'/K)=\Gal(k'/k)=\langle \overline{F} \rangle$, where $\overline{F}$ is the image of $F$ under $\Gal(\overline{k}/k) \to \Gal(k'/k)$. We view $\overline{F}$ as an element of $G/N$, hence as a coset of $G$. If $P$ is a rational prime of $K$, one easily finds $(P,M/K) \subseteq \overline{F} \subseteq G$. 

We have the following alternative version of the Chebotarev density theorem.

\begin{theorem} \label{25c675}
Assume that we are in the situation as described above. Let $\gamma \in \overline{F} \subseteq G$ and assume that $m=\ord(\gamma)|r$. Let $k_m$ be the unique extension of degree $m$ of $k$ in some algebraic closure of $K$
containing $M$. Then the following hold:
\begin{enumerate}
 \item There is a unique element $\Delta \in \Gal(k_mM/K)$ such that $\Delta|_M=\gamma$ and $\Delta|_{k_m}=F|_{k_m}$;
 \item $M_{\gamma}=\left(k_mM\right)^{\langle \Delta \rangle}$ is geometrically irreducible over $k$ and satisfies $k_m M_{\gamma}=k_mM$.
\end{enumerate}
Let 
\begin{align*}
 \phi \colon \mathcal{P}_{M_{\gamma}/k}^1 &\to  \mathcal{P}_{K/k}^1 \\
 Q &\mapsto Q|_K
\end{align*}
be the natural map. For $P \in \mathcal{P}_{K/k}^1$ we have $\# \phi^{-1}(P)= \#N \cdot
(P,M)(\gamma)$. Finally, we have
\begin{align*}
\sum_{P \in \mathcal{P}_{K/k}^1} (P,M)(\gamma)=\frac{\# \mathcal{P}_{M_{\gamma}/k}^1}{\# N}.
\end{align*}
\end{theorem}

Note that $M_{e}=M$, where $e$ is the identity element of $G$.

To see the resemblance with the conventional versions of the Chebotarev density theorem, we consider the case when $k$ is a finite field. We denote the genus of $M$ by $g_{k'}(M)$. The genus of $M$, which is equal to the genus of $M_{\gamma}$, allows us to estimate $\# \mathcal{P}_{M_{\gamma}/k}^1$. We find the following. 

\begin{corollary} \label{25c6}
Assume that $k$ is finite of cardinality $q$. Let $\gamma \in \overline{F}$. Then we have
\begin{align*}
 |\sum_{P \in \mathcal{P}^1_{K/k}} (P,M)(\gamma)-\frac{1}{\#N}(q+1)| \leq \frac{1}{\#N} 2g_{k'}(M) \sqrt{q}.
\end{align*}
\end{corollary}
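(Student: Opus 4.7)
The plan is to reduce the corollary to the Hasse--Weil bound applied to the auxiliary function field $M_\gamma$ produced by Theorem~\ref{25c675}. Since $k=\F_q$ is finite, $\Gal(\overline{k}/k) \cong \hat{\Z}$, so any positive integer, in particular $m = \ord(\gamma)$, divides the Steinitz order $r$, and the hypotheses of Theorem~\ref{25c675} are met. Let $\phi\colon \mathcal{P}_{M_\gamma/k}^1 \to \mathcal{P}_{K/k}^1$ be the map produced there. Reading the sum in the corollary as being over $\mathcal{P}_{K/k}^1$, I would sum the fiber formula $\#\phi^{-1}(P) = \#N \cdot (P,M)(\gamma)$ over $P$ and use that the fibers partition $\mathcal{P}_{M_\gamma/k}^1$ to obtain
\[
\#N \sum_{P \in \mathcal{P}_{K/k}^1}(P,M)(\gamma) \;=\; \#\mathcal{P}_{M_\gamma/k}^1.
\]

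Next I would invoke Hasse--Weil. By Theorem~\ref{25c675}(ii), $M_\gamma$ is geometrically irreducible over $k=\F_q$, hence is the function field of a smooth projective geometrically irreducible curve over $\F_q$ of some genus $g$. The Weil bound $\bigl|\#\mathcal{P}_{M_\gamma/k}^1 - (q+1)\bigr| \leq 2g\sqrt{q}$, combined with the previous display and divided through by $\#N$, reduces the corollary to the genus identity $g = g_{k'}(M)$.

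Identifying these genera is the step I expect to require the most care; it will be extracted from the equality $k_m M_\gamma = k_m M$ in Theorem~\ref{25c675}(ii). Since $\gamma$ projects to $\overline{F}$ in $G/N \cong \Gal(k'/k)$ and $\overline{F}$ has order $[k':k]$, we obtain $[k':k] \mid m$, and hence $k' \subseteq k_m$. Thus $k_m M / k_m$ is the constant-field extension of the geometrically irreducible $M/k'$ to $k_m$, and therefore has genus $g_{k'}(M)$. Similarly, geometric irreducibility of $M_\gamma/k$ gives $k_m \cap M_\gamma \subseteq \overline{k} \cap M_\gamma = k$, hence linear disjointness of $k_m$ and $M_\gamma$ over $k$; so $k_m M_\gamma / k_m$ is the constant-field extension of $M_\gamma/k$, with genus $g$. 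Invariance of the genus under extension of the constant field, applied through $k_m M_\gamma = k_m M$, then yields $g = g_{k'}(M)$, completing the plan.
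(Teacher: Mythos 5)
Your proposal is correct and follows essentially the same route as the paper: sum the fiber formula $\#\phi^{-1}(P)=\#N\cdot(P,M)(\gamma)$ from Theorem~\ref{25c675} to get $\#N\sum_P (P,M)(\gamma)=\#\mathcal{P}^1_{M_\gamma/k}$, then apply Hasse--Weil to $M_\gamma$ together with the identity $g_k(M_\gamma)=g_{k'}(M)$. Your reading of the sum as running over rational primes matches the paper's intent, and your justification of the genus identity via $k_mM_\gamma=k_mM$ and invariance of the genus under constant field extension supplies a detail the paper merely asserts.
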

To get an even more familiar version, sum over all $\gamma$ inside a conjugacy class. A similar approach can be found in \cite[Section 6.4]{JAR}, especially in Lemma 6.4.2, Lemma 6.4.4 and Proposition 6.4.8. In this approach one does not consider ramified primes and hence the above result is modified, by estimating the number of ramified points using Riemann-Hurwitz. Hence the expression they obtain involves more terms. Also, a similar result can be obtained when $M/K$ is just assumed to be normal, since primes extend uniquely in purely inseparable extensions. We finish this section by giving an example of our approach, which shows how the ramified primes make the results nicer.

\begin{example}
Let $k$ be a finite field of cardinality $q$. Let 
\begin{align*}
E: Y^2 + a_1XY+a_3Y=X^3+a_2X^2+a_4X+a_6
\end{align*}
 be an elliptic curve over $k$. Let $M=k(X)[Y]/(Y^2 + a_1XY+\ldots)$ be the function field of $E$, which is a cyclic extension of degree $2$ over $K=k(X)$ with group say $G=\{e, g\}$. The rational primes which are ramified in the extension $M/K$ correspond to the $X$-coordinates of the points of $E(k)[2]$. The unramified rational primes which split, have $e$ as Frobenius. The unramified primes which do not split, have $g$ as Frobenius. The ramified rational primes have Frobenius equally split over $e$ and $g$. One has $N=G$ and we find: 
\begin{align*}
\sum_{P \in \mathcal{P}^1_{K/k}} (P,M)(e)= \frac{\# E(k)- \# E(k)[2]}{2} +  \frac{\# E(k)[2]}{2}= \# E(k)/2.
\end{align*}
Using $\sum_{h \in G} \sum_{P \in \mathcal{P}^1_{K/k}} (P,M)(h) = q+1$, one finds
\begin{align*}
\sum_{P \in \mathcal{P}^1_{K/k}} (P,M)(g)= (q+1)-\# E(k)/2.
\end{align*}
One has $M_e=M$, and $M_g$ is the function field of the quadratic twist of $E$. Note that the number of unramified rational primes with $e$ as Frobenius has the less clean expression $\frac{\# E(k)- \# E(k)[2]}{2}$.
\end{example}

\subsection{Proof of the Chebotarev density theorem}

We continue using the notation introduced before Theorem \ref{25c675}. Set $h=[k':k]$. 

\begin{lemma} \label{25c55}
Assume that we are in the situation as described above. Let $\gamma \in \overline{F} \subseteq G$ and assume that $m=\ord(\gamma)|r$. Let $k_m$ be the unique extension of degree $m$ of $k$ in some algebraic closure of $K$
containing $M$. Then the following hold:
\begin{enumerate}
 \item There is a unique element $\Delta \in \Gal(k_mM/K)$ such that $\Delta|_M=\gamma$ and $\Delta|_{k_m}=F|_{k_m}$;
 \item $M_{\gamma}=\left(k_mM\right)^{\langle \Delta \rangle}$ is geometrically irreducible over $k$ and satisfies $k_m M_{\gamma}=k_mM$.
\end{enumerate}
Furthermore, there is a well-defined map
\begin{align*}
\varphi \colon \mathcal{P}_{M_{\gamma}/k}^1 &\to S=\{Q \in \mathcal{P}_{M/k}: \deg_k(Q|_K)=1,\ \gamma \in (Q,M/K) \} \\
Q'|_{M_{\gamma}} &\mapsto Q'|_M,
\end{align*}
where $Q' \in \mathcal{P}_{k_mM/k}$. For $Q \in S$ we have $\# \varphi^{-1}(Q) = \frac{\deg_k(Q)}{h}$.
\end{lemma}
\begin{proof} Note that $\gamma N$ is a generator in $G/N$, and hence one finds $m \equiv 0
\pmod{\#G/N}$. This shows that $k_mK \cap M = k'K$.
We have the following diagram:

\[
\xymatrix{
& k_mM & \\
k_m K \ar[ru] & & M \ar[lu] \\
& k'K=k_mK \cap M \ar[lu] \ar[ru] & \\
& K. \ar[u] &
}  
\]
One has $\Gal(k_mM/K)=\Gal(k_mK/K) \times_{\Gal(k'K/K)} \Gal(M/K) \ni \Delta$ and the first statement follows.
Note that $M_{\gamma}\cap k_{m}K=K$. Furthermore, notice that $k_mM_{\gamma}$ is not fixed by any non-trivial element of $\langle \Delta \rangle$, and hence is equal to $k_mM$. 

We claim that the following three statements are equivalent for $P' \in \mathcal{P}_{k_mM/k}$:
1. $\gamma \in (P'|_M,M/K)$ and $P'|_K$ is rational; \\
2. $\Delta \in (P',k_mM/K)$ and $P'|_K$ is rational; \\
3. $P'|_{M_{\gamma}}$ is rational. 

1 $\implies$ 2: We claim $(P',k_mM/K)=(P'|_{k_mK},k_mK/K) \times (P'|_M,M/K)$. Indeed, both sets have the same size as $k_mK/K$ is unramified and it follows that the natural injective map is a bijection. From the rationality of $P'|_K$ and the assumption $\gamma \in (P'|_M,M/K)$, one obtains $\Delta \in (P',k_mM/K)$.

2 $\implies$ 1: obvious. 

2 $\implies$ 3: By 2 one has $\Gal(k_mM/M_{\gamma})=\langle \Delta  \rangle = \Dv_{P',M_{\gamma}}$. It follows that in the constant field extension $k_mM/M_{\gamma}$ that $P'$ is the unique prime above $P'_{M_{\gamma}}$. From 2 it follows that $\deg_k(P')|\mathrm{ord}(\Delta)=m$. By the theory of constant field extensions \cite[Theorem 3.6.3]{ST}, it follows that $P'_{M_{\gamma}}$ is rational. 

3 $\implies$ 2: If $P'|_{M_{\gamma}}$ is rational, then so is $P'|_K$, and one finds
\begin{align*}
\{\Delta \} = (P', k_mM/M_{\gamma}) \subset (P', k_mM/K).
\end{align*}

We will now look at $\varphi$. For a rational prime $P \in \mathcal{P}_{M_{\gamma}/k}^1$ there is a unique prime above it in $k_mM$. The implication 3 $\implies$ 1 shows that $\varphi$ is well-defined. We will calculate the sizes of the fibers.
Take a prime $Q \in S$. Notice that $[k_mM:M]=m/h$ and that $\deg_k(Q)|m$. In the extension
$k_mM/M$, the residue field of $Q$ grows with a degree $m/\deg_k(Q)$ and hence there are $\frac{m/h}{m/\deg_k(Q)}=\frac{\deg_k(Q)}{h}$
primes above it in $k_mM$ \cite[Theorem 3.6.3]{ST}. Each such prime gives a different preimage under $\varphi$ and by $1 \implies$ 3 we find all preimages.
\end{proof}

\begin{lemma} \label{25c56}
 Let $\gamma \in \overline{F}$ and let $\Gamma$ be its conjugacy class in $G$. Consider the natural surjective restriction map, where $S=\{Q \in \mathcal{P}_{M/k}: \deg_k(Q|_K)=1,\ \gamma \in (Q,M/K) \}$ and $T=\{P \in \mathcal{P}_{K/k}: \deg_k(P)=1, \gamma \in (P,M/K)\}$,
\begin{align*}
 \psi \colon S \to T,\ Q \mapsto Q|_K.
\end{align*}
Then for $P \in T$ with prime $Q \in S$ above it we have 
\begin{align*}
 \# \psi^{-1}(P)=\frac{\#G}{\#\Gamma \cdot \# \Dv_{Q,K}} \cdot \#\left( (Q,M/K) \cap \Gamma\right).
\end{align*}
\end{lemma}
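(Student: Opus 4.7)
The plan is to use the transitive $G$-action on primes above $P$ and then apply an orbit-stabilizer counting argument.

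Fix $P \in T$ and a prime $Q \in S$ above $P$. Since $G$ acts transitively on the primes of $M$ above $P$, every such prime is of the form $gQ$ for some $g \in G$, and the stabilizer of $Q$ is exactly $\Dv_{Q,K}$; thus the map $G/\Dv_{Q,K} \to \{\text{primes above } P\}$, $g\Dv_{Q,K} \mapsto gQ$, is a bijection. Under this action one checks the standard identities $\Dv_{gQ,K}=g\Dv_{Q,K}g^{-1}$ and, since the Frobenius set is defined intrinsically in terms of the decomposition and inertia subgroups at the chosen prime, $(gQ,M/K)=g(Q,M/K)g^{-1}$.

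Therefore $gQ$ lies in $S$ (i.e.\ satisfies $\gamma\in(gQ,M/K)$) if and only if $g^{-1}\gamma g\in(Q,M/K)$. Consequently
\begin{align*}
\#\psi^{-1}(P) \;=\; \#\{g\Dv_{Q,K}\in G/\Dv_{Q,K}:\, g^{-1}\gamma g\in(Q,M/K)\} \;=\; \frac{1}{\#\Dv_{Q,K}} \,\#X,
\end{align*}
where $X=\{g\in G:\ g^{-1}\gamma g\in(Q,M/K)\}$.

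Now I would count $X$ by fibering over the target of the conjugation map $g\mapsto g^{-1}\gamma g$. The image of this map is exactly the conjugacy class $\Gamma$ of $\gamma$, so an element $\delta\in(Q,M/K)$ contributes to $X$ only when $\delta\in(Q,M/K)\cap\Gamma$. For each such $\delta$, orbit--stabilizer for the conjugation action gives $\#\{g:g^{-1}\gamma g=\delta\}=\#C_G(\gamma)=\#G/\#\Gamma$. Summing over $\delta\in(Q,M/K)\cap\Gamma$ yields $\#X=\frac{\#G}{\#\Gamma}\cdot\#\left((Q,M/K)\cap\Gamma\right)$, and dividing by $\#\Dv_{Q,K}$ gives the claimed formula.

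The only real subtlety is the compatibility $(gQ,M/K)=g(Q,M/K)g^{-1}$, which I would justify by noting that the residue-field generator picked out via the chosen embedding $\kv_Q\subseteq\overline{k}$ transports equivariantly under the $G$-action on primes; everything else is pure group-theoretic counting. The surjectivity of $\psi$, asserted in the statement, is immediate from the fact that $(P,M/K)$ is by convention the union $\bigcup_{Q'\mid P}(Q',M/K)$, so $\gamma\in(P,M/K)$ forces at least one prime above $P$ to lie in $S$.
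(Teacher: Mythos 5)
Your argument is correct and is essentially the paper's own proof: both reduce $\#\psi^{-1}(P)$ to counting $X=\{g\in G:\ g^{-1}\gamma g\in (Q,M/K)\}$ via the identity $(gQ,M/K)=g(Q,M/K)g^{-1}$, obtain $\#X=\frac{\#G}{\#\Gamma}\cdot\#\bigl((Q,M/K)\cap\Gamma\bigr)$ by the orbit--stabilizer relation for conjugation, and divide by $\#\Dv_{Q,K}$ because that is the stabilizer of $Q$. Your extra remarks on surjectivity and on fibering the conjugation map are fine and only make explicit what the paper leaves implicit.
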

\begin{proof}
 Let $Q \in S$ lie above $P$. Then we have $(Q,M/K)=\gamma \Iv_{Q,K}$. For $g \in G$ we have $(gQ,M/K)=g(Q,M/K)g^{-1}$. So $\gamma \in (gQ,M/K)$ iff
$\gamma \in g (Q,M/K) g^{-1}$ iff $g^{-1} \gamma g \in (Q,M/K)$. Let $G_{\gamma}$ be the stabilizer of $\gamma$ under the conjugation action of
$G$ on itself. Then the number of $g \in G$ such that $\gamma \in (gQ,M/K)$ is equal to $\# G_{\gamma} \cdot
\# \left((Q,M/K) \cap \Gamma \right)=\frac{\#G}{\#\Gamma} \cdot  \# \left((Q,M/K) \cap \Gamma \right)$. 

Furthermore, suppose that for $g, g' \in G$ we have $gQ=g'Q$. Then $g'^{-1}g \in \Dv_{Q,K}$. This shows that $\# \psi^{-1}(P)=\frac{\#G}{\#\Gamma
\cdot \# \Dv_{Q,K}} \cdot \# \left((Q,M/K) \cap \Gamma \right)$.
\end{proof}

We can finally prove the new version of the Chebotarev density theorem.

\begin{proof}[Proof of Theorem \ref{25c675}]
The first part directly follows from Lemma \ref{25c55}. The rest of the proof will follow from combining Lemma \ref{25c55} and Lemma \ref{25c56}. We follow the notation from these lemmas. Note that $\phi=\psi \circ \varphi$. Let $P \in T$ and let $Q \in \psi^{-1}(P)$. Note that $\deg_k(Q)$ does not depend on the choice of $Q$. 
One has:
\begin{align*}
\# \phi^{-1}(P) &= \# \varphi^{-1} \circ \psi^{-1}(P) \\
&= \frac{\deg_k(Q)}{h} \cdot \frac{\#G}{\#\Gamma \cdot \# \Dv_{Q,K}} \cdot \#\left( (Q,M/K) \cap \Gamma\right) \\
&= \frac{\#N}{\#\Gamma} \cdot \frac{\deg_k(Q) \cdot \#\left( (Q,M/K) \cap \Gamma\right)}{\# \Dv_{Q,K}} \cdot \frac{\#(Q,M/K)}{\#(Q,M/K)} \\
&= \#N \cdot (P,M)(\gamma) \cdot \deg_k(Q) \cdot \frac{\#(Q,M/K)}{\# \Dv_{Q,K}} \\
&= \#N \cdot (P,M)(\gamma).
\end{align*}
The last statement follows very easily.
\end{proof}

We will now prove the corollary.

\begin{proof}[Proof of Corollary \ref{25c6}] We use Theorem \ref{25c675}. By Hasse-Weil (\cite[Theorem 5.2.3]{ST}) we have 
\begin{align*}
|\{P \in \mathcal{P}_{M_{\gamma}/k}: \deg_k(P)=1\}-(q+1)\}| \leq 2g_k(M_{\gamma}) \sqrt{q}=2g_{k'}(M) \sqrt{q}. 
\end{align*}
This gives the required result.
\end{proof}

\bibliographystyle{acm}

\end{document}